\newtheorem{theorem}{Theorem}
\newtheorem*{thm}{Theorem}
\newtheorem*{lemma}{Lemma}
\newtheorem{corollary}{Corollary}
\newtheorem*{proposition}{Proposition}
\newcommand{\diam}{\operatorname{diam}}
\begin{document}

\title[]{On the location of Maxima of Solutions\\ of Schr\"odinger's equation}
\keywords{Schr\"odinger equation, location of maxima, torsion function, ground state.}
\subjclass[2010]{35B38, 35J05 (primary)}

\author[]{Manas Rachh}
\address[Manas Rachh]{Applied Mathematics Program, Yale University, New Haven, CT 06510, USA}
\email{manas.rachh@yale.edu}

\author[]{Stefan Steinerberger}
\address[Stefan Steinerberger]{Department of Mathematics, Yale University, New Haven, CT 06510, USA}
\email{stefan.steinerberger@yale.edu}

\begin{abstract} 
We prove an inequality with applications to solutions of 
the Schr\"odinger equation. There is a universal constant $c>0$, 
such that if $\Omega \subset \mathbb{R}^2$ is simply connected, 
$u:\Omega \rightarrow \mathbb{R}$ vanishes on the boundary $\partial \Omega$, 
and $|u|$ assumes a maximum in $x_0 \in \Omega$, then 
$$   \inf_{y \in \partial \Omega}{ \| x_0 - y\|}  \geq c \left\| \frac{\Delta u}{u} \right\|^{-1/2}_{L^{\infty}(\Omega)}.$$
It was conjectured by P\'olya \& Szeg\H{o} (and proven, independently, by Makai and Hayman) that a membrane vibrating at frequency $\lambda$
contains a disk of size $\sim \lambda^{-1/2}$. Our inequality implies a refined result: the point on the membrane that achieves the maximal amplitude is at distance $\sim \lambda^{-1/2}$ from the boundary. We also give an extension to higher dimensions (generalizing results of 
Lieb, and Georgiev \& Mukherjee): if $u$ solves $-\Delta u = Vu$ on $\Omega \subset \mathbb{R}^n$ with Dirichlet boundary conditions, then
the ball $B$ with radius $\sim \|V\|_{L^{\infty}(\Omega)}^{-1/2}$ centered at the point in which $|u|$ assumes a maximum is almost fully contained in $\Omega$ in the sense that $|B \cap \Omega| \geq 0.99 |B|.$
\end{abstract}

\maketitle

\vspace{-20pt}

\section{Introduction}

\subsection{Introduction} The purpose of this paper is to state and prove an inequality saying that on a simply connected domain $\Omega \subset \mathbb{R}^2$, a nonzero solution of the partial differential equation
\begin{align*}
- \Delta u &= V u \hspace{2pt}  \quad \mbox{in}~\Omega \\
 u &= 0     \qquad \hspace{0pt} \mbox{on}~\partial\Omega, 
\end{align*}
cannot have its maxima or minima close to the boundary unless $\|V\|_{L^{\infty}}$ is large.
Dirichlet conditions are necessary because under Neumann conditions one expects the maximum to be assumed on the boundary (this is
the hot spots conjecture, see \cite{burd2}).

\begin{theorem}
There is a constant $c>0$ such that for all simply-connected $\Omega \subset \mathbb{R}^2$ and all $u:\Omega \rightarrow \mathbb{R}$, the following holds: if $u$ vanishes on the boundary $\partial \Omega$ and $|u(x_0)| = \| u \|_{L^{\infty}(\mathbb{R})}$, then
$$   \inf_{y \in \partial \Omega}{ \| x_0 - y\|}  \geq c \left\| \frac{\Delta u}{u} \right\|^{-1/2}_{L^{\infty}(\Omega)}.$$
\end{theorem}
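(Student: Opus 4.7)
The strategy is to exploit simple connectedness via a Riemann map and reduce the problem to a localized estimate on the unit disk.

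Without loss of generality assume $u(x_0) = \|u\|_{L^\infty(\Omega)} > 0$; write $M=u(x_0)$, $r=\inf_{y\in\partial\Omega}\|x_0-y\|$, and $V=-\Delta u/u$. The Green's function representation
$$u(x_0) = \int_\Omega G_\Omega(x_0,y)V(y)u(y)\,dy,$$
combined with $|u|\leq M$ and $|V|\leq \|V\|_{L^\infty}$, gives the torsion-function lower bound
$$T_\Omega(x_0) \;\geq\; \|V\|_{L^\infty}^{-1}, \qquad -\Delta T_\Omega = 1,\ T_\Omega|_{\partial\Omega}=0.$$
What remains is essentially a geometric inequality at the maximum point, $T_\Omega(x_0) \lesssim r^2$. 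At \emph{generic} interior points of a simply connected domain this is false (near the slit of a slit disk $T$ is order one while the boundary distance is arbitrarily small), so the hypothesis that $x_0$ is a global maximum of $|u|$ must enter crucially.

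To bring in simple connectedness, let $\phi:\mathbb{D}\to\Omega$ be the Riemann map with $\phi(0) = x_0$. The Schwarz lemma and Koebe's $1/4$-theorem give $r\leq |\phi'(0)|\leq 4r$, so it suffices to show $|\phi'(0)|^2\|V\|_{L^\infty}\geq c$. The pull-back $\tilde u := u\circ\phi$ vanishes on $\partial\mathbb{D}$, attains its maximum modulus at the origin, and satisfies $-\Delta\tilde u = \hat V\tilde u$ on $\mathbb{D}$ with $\hat V(z):=|\phi'(z)|^2V(\phi(z))$. By conformal invariance of the Green function one also has the identity $T_\Omega(x_0) = \int_\mathbb{D} G_\mathbb{D}(0,z)|\phi'(z)|^2\,dz$.

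The crux is a localized Rayleigh-quotient estimate on the sub-disk $B(0,1/2)$, where Koebe's distortion theorem controls $|\phi'(z)| \leq K|\phi'(0)|$ for a universal $K$. Testing $\chi^2\tilde u$ against the pulled-back equation for a smooth cutoff $\chi\in C_c^\infty(B(0,1/2))$ equal to $1$ on $B(0,1/4)$, integrating by parts, and applying the Poincar\'e inequality on $B(0,1/2)$ yields schematically
$$\lambda_1\bigl(B(0,1/2)\bigr)\int (\chi\tilde u)^2 \;\leq\; \|\hat V\|_{L^\infty(B(0,1/2))}\int (\chi\tilde u)^2 + \int \tilde u^2 |\nabla\chi|^2.$$
The pointwise bound $|\tilde u|\leq \tilde u(0)$ controls the cutoff error and, using that the global maximum of $|\tilde u|$ is realized precisely at the origin, one extracts $\|\hat V\|_{L^\infty(B(0,1/2))}\geq c_1$ for an absolute $c_1$. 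Since Koebe distortion gives $\|\hat V\|_{L^\infty(B(0,1/2))}\leq K^2|\phi'(0)|^2\|V\|_{L^\infty}$, the required bound $|\phi'(0)|^2\|V\|_{L^\infty}\geq c_1/K^2$ and hence $r\geq c\|V\|_{L^\infty}^{-1/2}$ follow.

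The main obstacle is this localization: the naive Rayleigh quotient on all of $\mathbb{D}$ only produces $\|\hat V\|_{L^\infty(\mathbb{D})}\geq j_{0,1}^2$, which is useless for us because the supremum of $|\phi'|^2|V|$ may be attained near $\partial\mathbb{D}$ where $|\phi'|$ is unbounded. The hypothesis that $|\tilde u|$ attains its global maximum at the origin (not merely a local maximum, and not merely a point of large $L^2$ mass) is precisely what forces $\hat V$ to be large on a sub-disk of bounded hyperbolic radius, where Koebe distortion converts the pulled-back bound into a bound on $|\phi'(0)|$ itself.
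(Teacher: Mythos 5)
Your approach is genuinely different from the paper's. The authors work probabilistically: they apply the Feynman--Kac representation at the maximum point to get $(1-p_{x_0}(t))e^{t\|V\|_\infty}\geq 1$, rescale so that $d(x_0,\partial\Omega)=1$, and then use simple connectedness only through the elementary geometric observation that a long arc of $\partial\Omega$ must lie in the annulus $1\leq |x-x_0|\leq 2$, which gives a universal lower bound on the hitting probability $p_{x_0}(1)$. You instead try to push everything to the disk via a Riemann map and close with a localized elliptic estimate. The initial reductions in your write-up are sound: the Green's-function identity does give $T_\Omega(x_0)\geq\|V\|_\infty^{-1}$, Koebe does give $r\asymp|\phi'(0)|$ and $|\phi'|\asymp|\phi'(0)|$ on $B(0,1/2)$, and you correctly identify that the whole weight of the proof must fall on the assertion that the maximum is at the \emph{center}.

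The gap is at the sentence where you ``extract $\|\hat V\|_{L^\infty(B(0,1/2))}\geq c_1$''. The Caccioppoli/Rayleigh inequality you write down,
$$\lambda_1\bigl(B(0,1/2)\bigr)\int (\chi\tilde u)^2 \;\leq\; \|\hat V\|_{L^\infty(B(0,1/2))}\int (\chi\tilde u)^2 + \int \tilde u^2 |\nabla\chi|^2,$$
together with $|\tilde u|\leq \tilde u(0)=M$, does not yield such a bound. Rearranged, it says $\int(\chi\tilde u)^2 \leq \|\nabla\chi\|_{L^2}^2 M^2/(\lambda_1(B_{1/2}) - \|\hat V\|_{L^\infty(B_{1/2})})$ when the denominator is positive. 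The left side is at most $M^2|B_{1/2}|$ regardless, and the best possible lower bound $\int(\chi\tilde u)^2\geq c_0 M^2$ cannot exceed $c_0\leq \pi\rho_1^2$ if $\chi=1$ on $B_{\rho_1}$, while $\|\nabla\chi\|_{L^2}^2\geq 2\pi/\log(\rho_2/\rho_1)$ is forced by the $2$-capacity of the annulus. A short computation shows that for every choice of radii $\rho_1<\rho_2<1$ the inequality $\lambda_1(B_{\rho_2})\,c_0 > \|\nabla\chi\|_{L^2}^2$ fails (one needs roughly $j_0^2\,\rho_1^2\log(\rho_2/\rho_1)>2\rho_2^2$, and $j_0^2 t^2\log(1/t)$ never reaches $2$ for $t\in(0,1)$). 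So no cutoff produces a positive lower bound on $\|\hat V\|_{L^\infty(B_{1/2})}$; the conclusion you draw is not implied by the displayed estimate, and the role of ``the maximum is at the origin'' is never actually used in a way the energy inequality can see.

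There is a structural reason the localized argument cannot work: the conditions $|\tilde u|\leq\tilde u(0)$ and $-\Delta\tilde u=\hat V\tilde u$ restricted to $B(0,1/2)$ are perfectly compatible with $\hat V\equiv 0$ on $B(0,1/2)$ (take $\tilde u$ identically equal to $M$ there and let it decay to zero only in the outer annulus). The decay of $\tilde u$ is forced by the Dirichlet condition on $\partial\mathbb D$, which is invisible once you cut off to a compactly contained sub-disk, and your argument never re-injects that boundary information. The paper's probabilistic route avoids this trap precisely because the hitting probability $p_{x_0}$ encodes the full geometry of $\partial\Omega$ relative to $x_0$; the Feynman--Kac identity is applied globally, not locally, and the nonlocal input (simple connectedness forcing a long arc near $x_0$) is what drives the bound. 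To rescue your approach you would need a replacement for the Rayleigh-quotient step that sees the boundary, e.g.\ a Harnack-type or barrier argument on $\mathbb D$ itself, but that is a substantial missing idea, not a routine completion.
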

We interpret the $L^{\infty}-$term as $\infty$ if the quantity is unbounded or not defined. If $|u|$ does not assume a global maximum, the statement is empty. 
A careful analysis
of the proof shows that the result applies to positive maxima and negative minima 
($\forall x \in \Omega: u(x) \leq u(x_0)$ and $u(x_0) > 0$, and 
$\forall x \in \Omega: u(x) \geq u(x_0)$ and $u(x_0) < 0$, respectively). 
The exponent in the above inequality is sharp: if $\Omega = [0,\pi]^2$ and 
$ u(x,y) = \sin{(n x)}\sin{(y)}$ for $n \in \mathbb{N}$, then
$$   \inf_{y \in \partial \Omega}{ \| x_0 - y\|}  = \frac{\pi}{2n},  \qquad \mbox{and} \qquad \left\| \frac{\Delta u}{u} \right\|^{}_{L^{\infty}(\Omega)} = n^2 + 1.$$

\begin{center}
\begin{figure}[h!]
\begin{tikzpicture}[scale=0.7]
\draw [thick] (0,-1) to[out=340, in=120] (2,-0.5) to[out=300, in=210] (3,-1)  to[out=30, in=270] (4,0)  to[out=90, in=0] (3,1)   to[out=180, in=30] (2,0)   to[out=210, in=330] (1,0.5)    to[out=150, in=90] (-1,0) 
 to[out=270, in=160] (0,-1);
\filldraw (0.2,-0.15) circle (0.04cm);
%\draw []  (0.2,-0.15)  circle (0.35cm);
\node at (0.5, -0.2) {$x$};
\filldraw (3,-0.1) circle (0.04cm);
%\draw []  (3,-0.1)  circle (0.35cm);
\node at (3.3, -0.2) {$y$};
\node at (4.3, -0.8) {$\Omega$};
\end{tikzpicture}
\caption{Unless $\| (\Delta u)/u \|_{L^{\infty}(\Omega)}$ is large, extrema of $u$ are close to either $x$ or $y$.}
\end{figure}
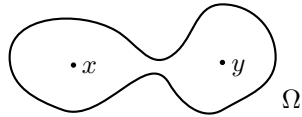
\end{center}

\subsection{Related results}
It is instructive to specialize the result to the case of the first 
Laplacian eigenfunction $-\Delta u = \lambda_1(\Omega) u$ with Dirichlet 
boundary conditions. 
A problem that was first raised in 1951 by P\'{o}lya \& Szeg\H{o} in their famous \textit{Isoperimetric Inequalities in Mathematical Physics} \cite{pol}
is whether there exists a constant $c_2>0$ such that for all simply connected domains $\Omega \subset \mathbb{R}^2$
$$ \mbox{inradius}(\Omega) \geq  c_2\cdot  \lambda_1(\Omega)^{-1/2}.$$
Hersch \cite{hersch} showed that the best constant for convex $\Omega \subset \mathbb{R}^2$ is $c_2= \pi/2$. The inequality was then first established by
Makai \cite{makai} in 1965 and, independently, by Hayman \cite{hayman} in 1977 (Makai's result was, for a long time, not well known, see \cite{car2}). The optimal
constant $c_2$ is sometimes called the Hayman-Makai constant or Osserman constant. Quantitative
improvements were given by Osserman \cite{osserman}, Croke \cite{croke} and Protter \cite{protter} -- a substantial refinement was given by Jerison \& Grieser \cite{jerison}. Currently, the best constant in the inequality is $c_2 \geq 0.78$ due to Banuelos \& Carroll \cite{ban2}. We prove a stronger (albeit non-quantitative) result.

\begin{corollary} There exists a universal constant $c_1 > 0$ such that the first Laplacian eigenfunction on a simply connected domain $\Omega \subset \mathbb{R}^2$ assumes its maximum at a distance of
at least $c_1 \cdot \lambda_1(\Omega)^{-1/2}$ from the boundary.
\end{corollary}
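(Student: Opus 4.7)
The plan is to recognize the Corollary as a direct specialization of the Theorem to the first Dirichlet Laplacian eigenfunction, so essentially no new work is required beyond verifying hypotheses and computing a single $L^\infty$ norm. First I would take $u = u_1$ to be the first eigenfunction, satisfying $-\Delta u_1 = \lambda_1(\Omega) u_1$ and $u_1 = 0$ on $\partial\Omega$. Standard spectral theory (Courant's nodal domain theorem, or equivalently a Krein-Rutman / Perron-Frobenius-type argument applied to the resolvent) guarantees that $u_1$ can be chosen strictly positive on $\Omega$. In particular, the global maximum of $|u_1| = u_1$ is attained at some interior point $x_0 \in \Omega$, and all hypotheses of the Theorem are met.

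The key observation is then that the eigenvalue equation forces the ratio $\Delta u_1/u_1$ to be a \emph{constant} throughout $\Omega$, namely $-\lambda_1(\Omega)$. Consequently
\[
\left\| \frac{\Delta u_1}{u_1} \right\|_{L^{\infty}(\Omega)} = \lambda_1(\Omega),
\]
and plugging this into the bound from the Theorem yields
\[
\inf_{y \in \partial \Omega} \| x_0 - y \| \geq c \cdot \lambda_1(\Omega)^{-1/2},
\]
which is the desired statement with $c_1 = c$ (the universal constant from the Theorem).

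Because the deduction is purely a substitution, there is no genuine obstacle: the hard analytic content has already been absorbed into the Theorem. The only points worth double-checking are that $\lambda_1(\Omega)$ is positive and finite (true on any bounded simply connected planar domain by Rellich-Kondrachov compactness) and that the first eigenfunction is single-signed, so that the maximum of $|u_1|$ coincides with a positive interior maximum of $u_1$, to which the ``positive maxima'' strengthening noted after the Theorem applies. In the degenerate case $\lambda_1(\Omega) = \infty$, the inequality is vacuous by the convention stated after the Theorem.
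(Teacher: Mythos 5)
Your proposal is correct and matches the paper's (implicit) argument: the corollary is the direct specialization of Theorem~1 to $u=u_1$, using that $\Delta u_1/u_1 \equiv -\lambda_1(\Omega)$ so the $L^\infty$ norm is exactly $\lambda_1(\Omega)$. The remarks about positivity of $u_1$ and the positive-maximum strengthening are accurate and consistent with what the paper notes after Theorem~1.
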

The example above already implies $c_1 \leq \pi/2$ and, trivially, $c_1 \leq c_2$.
Existing intricate constructions of Banuelos \& Carroll \cite{ban2} and Brown \cite{brown} give $c_2 \leq 1.446$ and this seems to be close
to optimal for $c_2$. We have a numerical example showing $c_1 \leq 1.37$ (see \S 1.4 below). It is, in principle,
possible to make our lower bound quantitative -- however, a rough estimate (see below) shows that our proof
in its current form will not be able to produce any bound that is better than $c_1 \geq 0.1$, which seems far from the truth. Any result of the form $c_1 \geq 0.78$ is likely to
be difficult as it would also imply an improved bound for the Hayman-Makai-Osserman constant $c_2$ (see \cite{ban2}).

\subsection{Higher dimensions} It was noted by Hayman \cite{hayman} 
that for $\Omega \subset \mathbb{R}^n$ and $n \geq 3$, it
is not possible to bound the inradius depending on 
$\lambda_1(\Omega)$. One can simply remove lines from the
set: this decreases the inradius but does not affect the eigenvalue. 
However, there is a celebrated result of Lieb stating that
one {\em essentially} finds a ball with the desired radius.

\begin{thm}[Lieb, 1983] Let $n \geq 3$ and $\Omega \subset \mathbb{R}^n$ be open. For every $\varepsilon > 0$, there exists a $c(\varepsilon) > 0$ such that
there exists a ball $B$ of radius $c(\varepsilon) \lambda_1(\Omega)^{-1/2}$ for which
$$ |B \cap \Omega| \geq (1- \varepsilon) |B|.$$
\end{thm}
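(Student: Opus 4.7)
The plan is to prove Lieb's theorem by contradiction using a variational/averaging argument anchored to the Faber--Krahn inequality. Fix $\varepsilon \in (0,1)$, suppose that \emph{every} ball $B$ of radius $r := c(\varepsilon)\lambda_1(\Omega)^{-1/2}$ satisfies $|B \cap \Omega| < (1-\varepsilon)|B|$, and aim to contradict the definition of $\lambda_1(\Omega)$ by constructing, on average, too-good a family of test functions for the Rayleigh quotient.

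The key device is a family of optimally sharp translated cutoffs. Let $u$ denote the first Dirichlet eigenfunction of $\Omega$, $L^2$-normalized with eigenvalue $\lambda = \lambda_1(\Omega)$, and let $\eta$ be the first Dirichlet eigenfunction of the unit ball $B_1 \subset \mathbb{R}^n$ normalized by $\int \eta^2 = 1$, so that $\int |\nabla \eta|^2 = c_B := \lambda_1(B_1)$. For each $y \in \mathbb{R}^n$, set $\chi_y(x) := r^{-n/2}\eta((x-y)/r)$; this cutoff is supported in $B(y,r)$, satisfies the pointwise identity $\int_{\mathbb{R}^n}\chi_y(x)^2\, dy \equiv 1$ (by a change of variables), and since $\eta$ vanishes on $\partial B_1$ while $u$ vanishes on $\partial \Omega$, the product $\chi_y u$ lies in $H^1_0(B(y,r) \cap \Omega)$.

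The core of the proof then consists of three steps. \textbf{(i)} The hypothesis $|B(y,r) \cap \Omega| < (1-\varepsilon)|B_r|$ and Faber--Krahn yield $\lambda_1(B(y,r) \cap \Omega) \geq c_B (1-\varepsilon)^{-2/n} r^{-2}$, so the Rayleigh quotient gives $\int (\chi_y u)^2 \leq r^2 (1-\varepsilon)^{2/n} c_B^{-1} \int |\nabla(\chi_y u)|^2$ for every $y$. \textbf{(ii)} Integrate both sides over $y \in \mathbb{R}^n$. By Fubini and the identity above, the left-hand side becomes $\int u^2 = 1$; on the right, expand $|\nabla(\chi_y u)|^2 = u^2|\nabla \chi_y|^2 + 2\chi_y u \nabla\chi_y \cdot \nabla u + \chi_y^2 |\nabla u|^2$ and integrate in $y$. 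The cross term vanishes because $\nabla_x \int \chi_y^2\, dy = \nabla_x 1 = 0$, the first term contributes $\int u^2 \cdot c_B/r^2 = c_B/r^2$, and the last contributes $\int |\nabla u|^2 = \lambda$. \textbf{(iii)} Combining, one obtains $1 \leq (1-\varepsilon)^{2/n}(1 + \lambda r^2 / c_B)$, which is contradicted by choosing $c(\varepsilon)$ strictly less than $\sqrt{c_B\bigl((1-\varepsilon)^{-2/n}-1\bigr)}$, yielding a constant of order $\sqrt{\varepsilon c_B/n}$.

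The main obstacle is the sharpness of the cancellation that forces the contradiction to close: both the Faber--Krahn lower bound and the kinetic energy of the cutoff must contribute \emph{exactly} the same constant $c_B$. This is what obliges $\eta$ to be the literal ground state of the unit ball; any less canonical bump function has $\int |\nabla \eta|^2 > c_B$, leaving a positive residual that survives $\varepsilon \to 0$ and breaks the argument. A subsidiary point is checking that $\chi_y u$ is a valid element of $H^1_0(B(y,r) \cap \Omega)$ despite $\eta$ only vanishing in the trace sense on $\partial B_1$, which requires a routine approximation argument. Everything else in the proof is Fubini and the classical Faber--Krahn inequality.
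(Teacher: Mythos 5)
The paper does not prove this statement; it is quoted as background, with a citation to Lieb's 1983 paper \cite{lieb}, and none of its ingredients reappear in the paper's own arguments (the paper's Theorem 2 is proved instead via Feynman--Kac, a capacity bound of Grigor'yan--Saloff-Coste, and Maz'ya's isocapacitory inequality). So there is no in-paper proof to compare against.

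Your proof is correct, and in fact it \emph{is} Lieb's original argument, lightly repackaged as a proof by contradiction. The identity $\int_{\mathbb{R}^n}\chi_y(x)^2\,dy\equiv 1$, the vanishing of the cross term because $\int_y \nabla_x(\chi_y^2)\,dy = 0$ (which one can also see directly as the integral of a total $y$-derivative of a compactly supported quantity), and the resulting bound $\int dy\,\lambda_1(\Omega\cap B(y,r))\,\|\chi_y u\|_{L^2}^2 \le \lambda + c_B/r^2$ are exactly Lieb's averaging lemma; combining it with Faber--Krahn on each slice $\Omega\cap B(y,r)$ and optimizing $r$ is how Lieb deduces the corollary about almost-full balls. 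Two small remarks. First, the hypothesis $n\ge 3$ is not used anywhere in your argument and is not needed for this theorem; the paper states it with $n\ge 3$ only because that is the regime in which the inradius genuinely fails to control $\lambda_1$. Second, the $H^1_0$ admissibility point you flag can be sidestepped entirely: it suffices to run Faber--Krahn with the ``relaxed'' eigenvalue $\widetilde\lambda_1(D) := \inf\{\int|\nabla v|^2 : v\in H^1(\mathbb{R}^n),\ \|v\|_{L^2}=1,\ v=0 \text{ a.e.\ off } D\}$, for which the rearrangement proof goes through verbatim and $\chi_y u$ is trivially admissible; alternatively, replace $\eta$ by $(\eta-\delta)_+$ and let $\delta\to 0$ at the end, since $\int|\nabla(\eta-\delta)_+|^2 \to c_B$.
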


Little seems to be known about the location of the maximum.  The second author \cite{stein1} proved that if  $-\Delta u = \lambda u$ with Dirichlet conditions and one starts Brownian motion where $|u|$ assumes its maximum, then the
likelihood of hitting the boundary within time $t=\lambda^{-1}$ is less than 63.3\% ($= (e-1)/e$).
Georgiev \& Mukherjee \cite{georg} used this fact to refine Lieb's theorem.  

\begin{thm}[Georgiev \& Mukherjee, \cite{georg}] The ball with all the properties in Lieb's theorem can be placed around the point where $|u|$
assumes its maximum.
\end{thm}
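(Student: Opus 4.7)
The plan is to argue by contradiction, using only the Brownian-motion exit bound of \cite{stein1} together with an elementary pointwise lower bound on the free Gaussian heat kernel. Write $\lambda := \lambda_1(\Omega)$, fix $\varepsilon > 0$, and set $r := c(\varepsilon)\lambda^{-1/2}$ with $c(\varepsilon)>0$ to be chosen at the end. Suppose, toward a contradiction, that $B := B(x_0, r)$ satisfies $|B \setminus \Omega| > \varepsilon |B|$; we derive a contradiction.

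The first ingredient is a mild sharpening of the $(e-1)/e$ bound in \cite{stein1}. Let $W_t$ denote Brownian motion started at $x_0$ and $\tau := \inf\{t \geq 0 : W_t \notin \Omega\}$ its first exit time. The heat-semigroup identity $\mathbb{E}_{x_0}[u(W_t); t < \tau] = e^{-\lambda t} u(x_0)$ for the first Dirichlet eigenfunction, combined with $|u(W_t)| \leq u(x_0) = \|u\|_{L^{\infty}}$, gives
\begin{equation*}
\mathbb{P}_{x_0}[\tau \leq t] \;\leq\; 1 - e^{-\lambda t} \qquad \text{for every } t > 0.
\end{equation*}
This is the same one-line argument as in \cite{stein1}, but read at an arbitrary time $t$ rather than only at $t = \lambda^{-1}$.

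The second ingredient is that free Brownian motion is diffuse on scale $\sqrt{t}$. For $y \in B$, the free heat kernel satisfies the trivial bound $p_{r^2}^{\mathrm{free}}(x_0,y) \geq (4\pi r^2)^{-n/2} e^{-1/4}$. Since the trajectory begins inside $\Omega$, the event $\{W_{r^2} \in B \setminus \Omega\}$ is contained in $\{\tau \leq r^2\}$, so integrating over $B \setminus \Omega$ yields
\begin{equation*}
\mathbb{P}_{x_0}[\tau \leq r^2] \;\geq\; \int_{B \setminus \Omega} p_{r^2}^{\mathrm{free}}(x_0, y)\,dy \;>\; \varepsilon\cdot \omega_n (4\pi)^{-n/2} e^{-1/4} \;=:\; \kappa_n \varepsilon,
\end{equation*}
where $\omega_n$ is the volume of the unit ball in $\mathbb{R}^n$. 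Combining with the exit bound at $t = r^2 = c(\varepsilon)^2/\lambda$ gives $\kappa_n \varepsilon < 1 - \exp(-c(\varepsilon)^2) \leq c(\varepsilon)^2$, which is contradicted by choosing, for instance, $c(\varepsilon) := \tfrac{1}{2}\sqrt{\kappa_n \varepsilon}$.

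I do not anticipate a substantial obstacle. Given the eigenfunction/heat-semigroup identity from \cite{stein1}, the argument reduces to a clean volume-versus-exit-probability comparison: if an $\varepsilon$-fraction of a $\lambda^{-1/2}$-scale ball around the maximum were to lie outside $\Omega$, free Brownian motion would exit $\Omega$ by time of order $\lambda^{-1}$ with probability $\gtrsim \varepsilon$, while the eigenfunction identity caps that exit probability at order $c(\varepsilon)^2$. The resulting dependence $c(\varepsilon) \sim \sqrt{\varepsilon}$ matches, up to constants, the scaling in Lieb's original theorem.
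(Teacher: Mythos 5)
Your argument is correct, and it is genuinely more elementary than the route taken in the paper (and in Georgiev--Mukherjee). The paper's proof of the companion Theorem~2, which contains this statement as the special case $V = \lambda_1(\Omega)$, goes through capacity: it invokes the Grigor'yan--Saloff-Coste hitting inequality $\mathrm{cap}(B\setminus\Omega) \leq p_{x_0}(t)\, r^{n-2}$ and then Maz'ya's isocapacitory inequality $|B\setminus\Omega|^{(n-2)/n} \leq \mathrm{cap}(B\setminus\Omega)$ to convert the hitting-probability bound into a volume bound. You bypass capacity entirely: the pointwise lower bound $p^{\mathrm{free}}_{r^2}(x_0,y) \geq (4\pi r^2)^{-n/2} e^{-1/4}$ on $B(x_0,r)$ gives $\mathbb{P}_{x_0}[\tau \leq r^2] \geq \mathbb{P}_{x_0}[W_{r^2} \in B\setminus\Omega] \geq \kappa_n\,|B\setminus\Omega|/|B|$ directly, which contradicts the exit bound $p_{x_0}(r^2) \leq 1 - e^{-\lambda r^2} \leq c(\varepsilon)^2$ once $c(\varepsilon)$ is chosen small. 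Both arguments rest on the same key lemma -- the exit-probability bound $p_{x_0}(t) \leq 1-e^{-\lambda t}$, read at the eigenfunction maximum -- but diverge from there. The capacity route is somewhat sharper: it yields $c(\varepsilon)\sim\varepsilon^{(n-2)/(2n)}$ rather than your $c(\varepsilon)\sim\varepsilon^{1/2}$, because the hitting probability of $B\setminus\Omega$ is really governed by its capacity and the isocapacitory inequality is lossy only in one direction. Your measure-theoretic route is shorter, self-contained, and requires no input from potential theory, at the cost of a weaker constant; for the qualitative statement being proved here, that trade-off is entirely harmless and the proof is complete.
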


We show that the relevant Brownian motion impact inequality is also true for solutions of Schr\"odinger equations and then use the argument of Georgiev \& Mukherjee 
in the more general setup.

\begin{theorem} Let $n \geq 3$, $\Omega \subset \mathbb{R}^n$ be open and 
suppose $u:\Omega \rightarrow \mathbb{R}$ solves $-\Delta u = V u$ with Dirichlet boundary conditions.
For every $\varepsilon > 0$ there exists a function $c(\varepsilon) > 0$ (depending only on the dimension) such that the ball $B$ centered at the maximum of $|u|$ 
$$ \mbox{with radius} \quad c(\varepsilon)\|V\|_{L^{\infty}(\Omega)}^{-1/2} \quad \mbox{satisfies} \qquad  |B \cap \Omega| \geq (1- \varepsilon) |B|.$$
\end{theorem}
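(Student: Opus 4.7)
The plan is to adapt the strategy of Georgiev \& Mukherjee to the Schr\"odinger equation $-\Delta u = Vu$ by generalizing the underlying Brownian-motion impact inequality. Their argument naturally splits into two parts: a probabilistic estimate controlling how quickly Brownian motion started at $x_0$ can exit $\Omega$, and a purely geometric step that converts such an impact bound into the volumetric estimate $|B \cap \Omega| \geq (1-\varepsilon)|B|$. Only the first part is sensitive to the PDE; the second carries over verbatim, with $\lambda_1(\Omega)$ replaced by $\|V\|_{L^\infty(\Omega)}$ throughout.

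The first step is to generalize the impact inequality of \cite{stein1} to the Schr\"odinger setting. Let $\omega_s$ be Brownian motion in $\mathbb{R}^n$ (normalized so that its generator is $\Delta$), started at $x_0$, and let $\tau$ denote its first exit time from $\Omega$. Since $u$ vanishes on $\partial\Omega$ and satisfies $(\Delta + V)u = 0$ in $\Omega$, the Feynman-Kac formula yields
$$ u(x_0) \;=\; \mathbb{E}_{x_0}\!\left[ u(\omega_t)\, \exp\!\left(\int_0^t V(\omega_s)\,ds\right) \mathbf{1}_{\{\tau > t\}} \right]$$
for every $t > 0$. Taking absolute values and using $|u(\omega_t)| \leq |u(x_0)|$ together with $V \leq \|V\|_{L^\infty(\Omega)}$, one obtains
$$ |u(x_0)| \;\leq\; |u(x_0)|\, e^{\,t\,\|V\|_{L^\infty(\Omega)}}\, \PP_{x_0}(\tau > t),$$
so that $\PP_{x_0}(\tau > t) \geq e^{-t\|V\|_{L^\infty(\Omega)}}$. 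The choice $t := \|V\|_{L^\infty(\Omega)}^{-1}$ gives $\PP_{x_0}(\tau > t) \geq 1/e$, which is the exact analog of the eigenfunction impact inequality used by Georgiev \& Mukherjee.

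For the second step, I would feed this estimate directly into the argument of \cite{georg}. Suppose, toward a contradiction, that for a constant $c(\varepsilon) > 0$ still to be chosen, the ball $B = B(x_0, c(\varepsilon)\|V\|_{L^\infty(\Omega)}^{-1/2})$ satisfies $|B \setminus \Omega| > \varepsilon |B|$. Because the heat kernel at time $s \leq t$ is comparable, up to dimensional constants, to $s^{-n/2}$ on $B(x_0, \sqrt{s})$, a missing set of measure $\varepsilon |B|$ inside $B$ forces the probability that the Brownian path enters $B \setminus \Omega$ -- and therefore crosses $\partial\Omega$ -- before time $t$ to exceed $1 - 1/e$, provided $c(\varepsilon)$ is chosen sufficiently small depending only on $\varepsilon$ and $n$. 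This contradicts the impact inequality from the previous paragraph.

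The main obstacle is really the same mild soft point that already appears in Theorem 1: one must have $V \in L^\infty(\Omega)$ so that the Feynman-Kac exponential is uniformly controlled, and $|u|$ must actually attain its maximum so that $x_0$ is well-defined. Beyond these hypotheses, essentially nothing new is required -- the proof concatenates the stationary Feynman-Kac formula with the Georgiev-Mukherjee geometric reduction.
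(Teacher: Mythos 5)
Your first step -- deriving the impact inequality $\PP_{x_0}(\tau > t) \geq e^{-t\|V\|_{L^\infty}}$ from Feynman--Kac -- is exactly the Lemma in the paper and is correct. Your second step departs from the paper's route (which bounds the capacity of $B\setminus\Omega$ above by the hitting probability via a Grigor'yan--Saloff-Coste estimate, and then the measure above by the capacity via the isocapacitory inequality) and instead tries to argue directly from a heat-kernel occupation estimate. That alternative route is in fact viable and even a bit more elementary, but your quantitative claim is wrong in a way that matters: a missing set of measure $\varepsilon|B|$ inside $B=B(x_0,r)$ does \emph{not} force the hitting probability up to $1-1/e$. The correct lower bound, from $\PP(\text{hit }A\text{ by time }s)\geq \PP_{x_0}(\omega(s)\in A)\geq |A|\inf_{y\in B}p_s(x_0,y)$ with $s=r^2$, is only $\tilde c_n\,\varepsilon$ for a dimensional constant $\tilde c_n$ -- a small number when $\varepsilon$ is small, nowhere near $1-1/e$.

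Moreover, the mechanism in your phrase ``provided $c(\varepsilon)$ is chosen sufficiently small'' is backwards: shrinking $c(\varepsilon)$ shrinks $B\setminus\Omega$, and with $t=\|V\|^{-1}$ held fixed that \emph{decreases} the probability of hitting $B\setminus\Omega$, it does not push it up to $1-1/e$. What shrinking $c(\varepsilon)$ actually buys is on the other side of the inequality: taking $t=r^2=c(\varepsilon)^2\|V\|^{-1}$, the impact inequality gives the upper bound $p_{x_0}(r^2)\leq 1-e^{-c(\varepsilon)^2}\leq c(\varepsilon)^2$. The contradiction is then between the upper bound $c(\varepsilon)^2$ and the lower bound $\tilde c_n\varepsilon$, and one closes it by choosing $c(\varepsilon)^2<\tilde c_n\varepsilon$. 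As written, your argument both asserts a false numerical bound and inverts the direction in which the free parameter helps, so this step does not go through; but if you replace the claim ``exceeds $1-1/e$'' by the correct $\gtrsim_n\varepsilon$ bound and apply the impact inequality at time $t=r^2$ rather than $t=\|V\|^{-1}$, the heat-kernel route does complete the proof and sidesteps the capacity machinery the paper uses.
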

This generalizes the results of Lieb and Georgiev \& Mukherjee. It also
refines a result of De Carli \& Hudson \cite{dec}: they show that if $-\Delta u = Vu$ in $\Omega \subset \mathbb{R}^n$ with Dirichlet boundary conditions, then 
$$|\Omega| \cdot \|V\|_{L^{\infty}(\Omega)}^{n/2} \geq c_n \, , 
\qquad \mbox{for some universal constant}~c_n > 0.$$
They also determine the optimal $c_n$ (which is attained for $\Omega$ a ball and $V$ constant). Our result immediately implies
the same universal bound (though without the sharp constant) since $\Omega$ contains large portions of a ball whose radius already implies the desired bound on
the volume.

\subsection{The torsion function.} Our discovery of the inequality was motivated by studying a related problem.
Let $\Omega \subset \mathbb{R}^2$ be a convex and bounded domain and suppose that
\begin{align*}
- \Delta u &= f(u) \quad \mbox{in}~\Omega \\
 u &= 0     \qquad \hspace{4pt} \mbox{on}~\partial\Omega 
\end{align*}
has a positive solution. Here $f$ is assumed to be Lipschitz continuous and restoring: $f(z) > 0$ whenever $z>0$.
Cima \& Derrick \cite{cima1} and Cima, Derrick \& Kalachev \cite{cima2} asked whether the location of the maximum is independent of the nonlinearity $f$.
This was disproven by Benson, Laugesen, Minion and Siudeja \cite{be}: they considered the nonlinearities $f(u) = 1$
and $f(u) = \lambda_1(\Omega)u$ on both the semidisk and the right isoceles triangle and showed the maxima to be in different
locations by explicit computation: however, they are \textit{extremely} close (their distance is of order $\sim 10^{-5}$ assuming $\diam(\Omega) \sim 1$).
We found this fact absolutely striking and it originally motivated the work that resulted in this paper.
Let us introduce the torsion function as the solution of
\begin{align*}
 \Delta v &= -1 \qquad  \hspace{1pt}\mbox{in}~\Omega \\
 v &= 0     \qquad \mbox{on}~\partial\Omega .
\end{align*}
It has recently received a lot of attention as a predictor for where 
Laplacian eigenfunctions localize \cite{may2,may1,stein2}.
The following consequence of the main result is a further instance of the connection between the Laplacian eigenfunction and the torsion function.

\begin{corollary}  There exists a universal constant $c>0$ such that for every bounded, simply connected $\Omega \subset \mathbb{R}^2$ for which the first Laplacian eigenfunction assumes
a global maximum in $x_0 \in \Omega$
$$  v(x_0) \geq c\sup_{x \in \Omega}{v(x)}.$$
\end{corollary}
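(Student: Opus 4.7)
The plan is to bracket both $v(x_0)$ and $\sup_{\Omega} v$ by universal multiples of $\lambda_1(\Omega)^{-1}$, using Theorem 1 for the lower bracket and classical inequalities for simply connected planar domains for the upper one.

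First, I apply Theorem 1 to $u = u_1$, the first Dirichlet eigenfunction of $\Omega$. Since $-\Delta u_1 = \lambda_1(\Omega) u_1$, we have $\|\Delta u_1/u_1\|_{L^\infty(\Omega)} = \lambda_1(\Omega)$ exactly, so the theorem supplies a universal $c_1 > 0$ with
$$ d := \inf_{y \in \partial\Omega}\|x_0 - y\| \geq c_1 \lambda_1(\Omega)^{-1/2}. $$
In particular $B_d(x_0) \subset \Omega$. The ball $B_d(x_0)$ has the explicit torsion function $\tilde v(y) = (d^2 - |y-x_0|^2)/4$, which satisfies $\Delta \tilde v = -1 = \Delta v$ on $B_d(x_0)$. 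Hence $v - \tilde v$ is harmonic on $B_d(x_0)$ and non-negative on $\partial B_d(x_0)$ (since $\tilde v = 0$ there and $v \geq 0$ on $\overline{\Omega}$ by the maximum principle). So $v \geq \tilde v$ on $B_d(x_0)$, and evaluating at the center gives
$$ v(x_0) \geq \frac{d^2}{4} \geq \frac{c_1^2}{4\lambda_1(\Omega)}. $$

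For the matching upper bound on $\sup_\Omega v$, I would invoke the standard estimate $\sup_\Omega v \leq C_1 \cdot \mathrm{inrad}(\Omega)^2$, known to hold with a universal $C_1$ for simply connected planar domains (in the spirit of the Banuelos--Carroll line of work cited in the paper, and provable for instance via the Koebe distortion theorem applied to the conformal map $\phi:\mathbb{D}\to\Omega$ based at a maximum of $v$). Combined with the elementary Faber--Krahn-type bound $\lambda_1(\Omega) \leq \lambda_1(B_{\mathrm{inrad}(\Omega)}) = j_{0,1}^2/\mathrm{inrad}(\Omega)^2$, this yields
$$ \sup_\Omega v \leq \frac{C_1 j_{0,1}^2}{\lambda_1(\Omega)}. $$
Combining with the lower bound from the previous paragraph produces $v(x_0) \geq c \sup_\Omega v$ with $c = c_1^2/(4 C_1 j_{0,1}^2)$.

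The principal obstacle is the inequality $\sup_\Omega v \leq C_1 \mathrm{inrad}(\Omega)^2$: this is the one ingredient not directly supplied by Theorem 1, and it is tempting but vacuous to try to apply Theorem 1 to $v$ itself, since $v = 0$ on $\partial\Omega$ forces $\|\Delta v/v\|_{L^\infty(\Omega)} = \infty$. This auxiliary estimate is also precisely where simple connectedness and the two-dimensional hypothesis enter: in higher dimensions, or in multiply connected planar domains, one can enlarge $\sup_\Omega v$ (by attaching tentacles to $\Omega$) while keeping $\lambda_1$ essentially fixed, and the conclusion of the corollary must fail.
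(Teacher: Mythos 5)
Your proposal is correct and follows the same overall strategy as the paper: apply Theorem~1 to the first eigenfunction to get $d := \mathrm{dist}(x_0,\partial\Omega) \geq c_1 \lambda_1(\Omega)^{-1/2}$, deduce $v(x_0) \gtrsim \lambda_1(\Omega)^{-1}$, and then combine with the upper bound $\sup_\Omega v \leq C\,\mathrm{inrad}(\Omega)^2 \leq C' \lambda_1(\Omega)^{-1}$. Where you diverge from the paper is in the middle step. The paper derives $v(x_0)\gtrsim d^2$ probabilistically, appealing to the interpretation of $v(x_0)$ as the expected exit time of Brownian motion from $\Omega$ started at $x_0$ and the heuristic that Brownian motion needs time $\sim d^2$ to escape a disk of radius $d$ (the authors explicitly leave the quantitative reflection-principle argument implicit). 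You instead compare $v$ to the torsion function $\tilde v(y) = (d^2 - |y-x_0|^2)/4$ of the inscribed disk $B_d(x_0)\subset\Omega$: since $v-\tilde v$ is harmonic in $B_d(x_0)$ and nonnegative on $\partial B_d(x_0)$, the minimum principle gives $v(x_0)\geq \tilde v(x_0) = d^2/4$. This is a cleaner and fully deterministic argument with an explicit constant, and it is a genuine improvement in rigor over the paper's sketch. One minor remark: you flag $\sup_\Omega v \leq C_1\,\mathrm{inrad}(\Omega)^2$ as the ``principal obstacle,'' but this is a known result for simply connected planar domains which the paper simply cites (with the explicit constant $C_1 = 4$, from Banuelos--Carroll), so in the paper's context it requires no new proof. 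Your observation that this is precisely where simple connectedness and dimension two enter, and why, is correct and matches the paper's implicit reliance on those hypotheses.
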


Generically, and perhaps for all convex domains, the constant seems to be remarkably close to 1.
We illustrate this with an example that was essentially picked at random: let the boundary of
$\Omega$ be given by (see Fig. 2)
$$r(\theta) =1 + 0.25\cos{(t)} + 0.4 \sin{(3t)}.$$

\begin{figure}[h!]
\begin{minipage}{0.49\textwidth}
\begin{center}
\includegraphics[width= 0.8\textwidth]{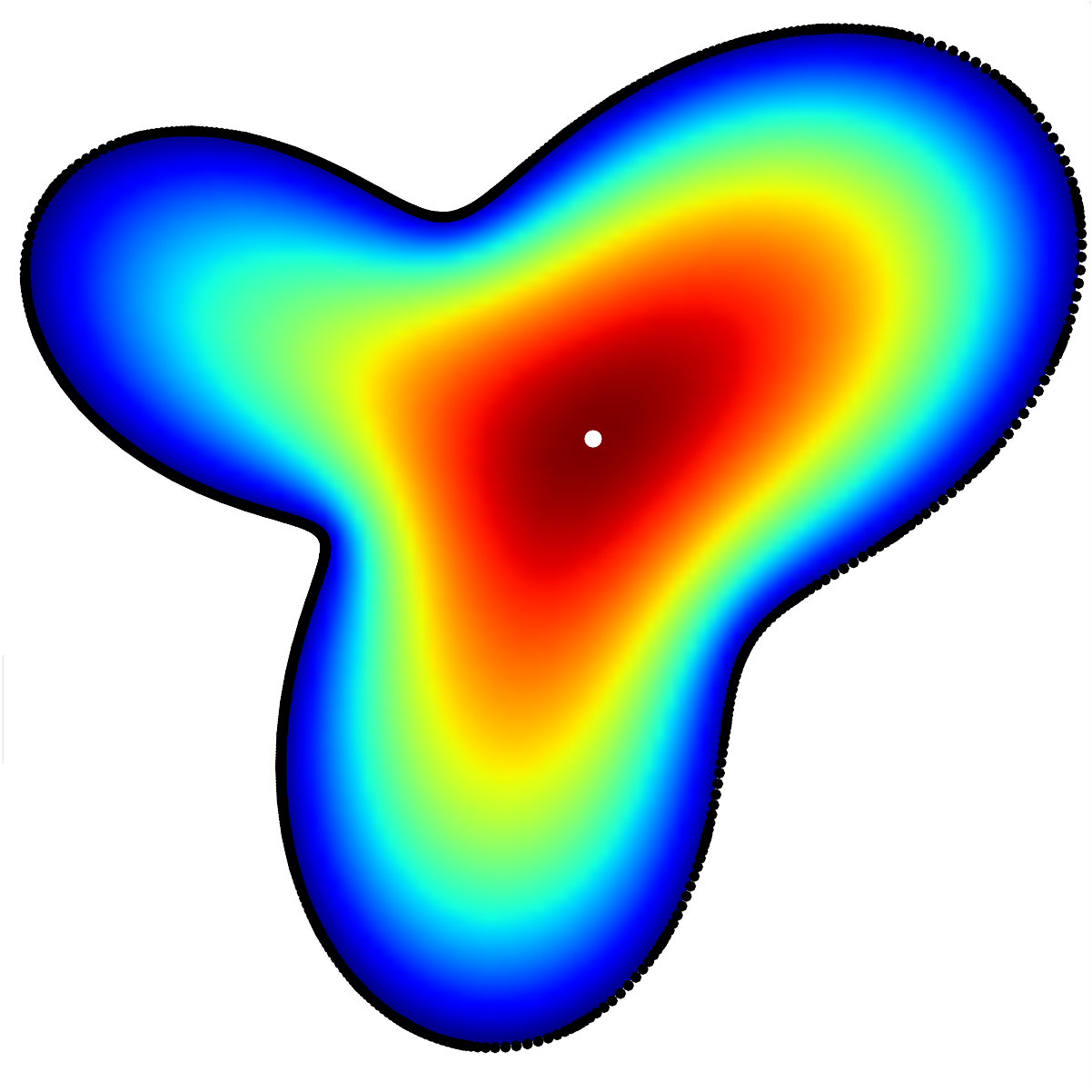}
\end{center}
\end{minipage}
\begin{minipage}{0.49\textwidth}
\begin{center}
\includegraphics[width=0.8\textwidth]{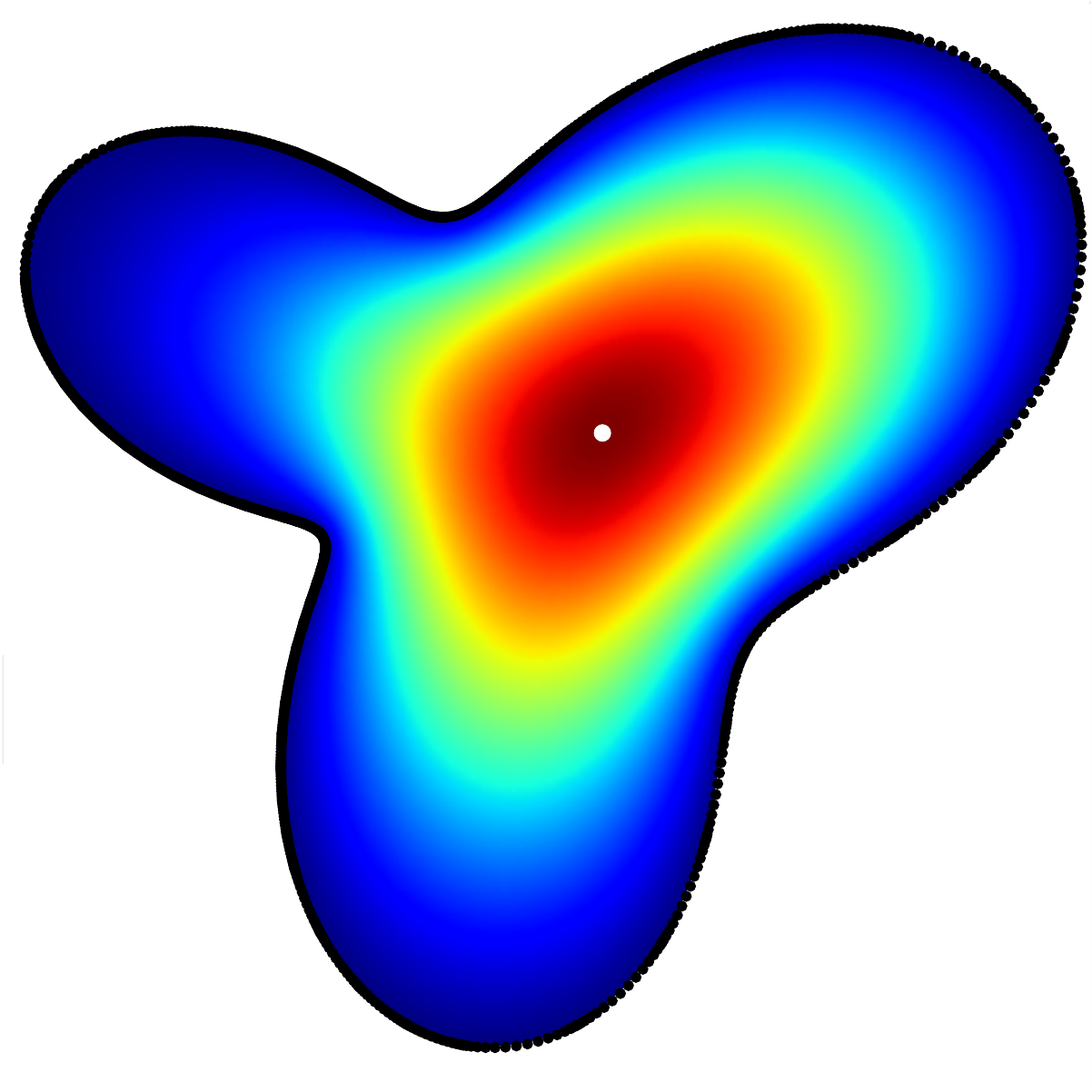}
\end{center}
\end{minipage}
\caption{The torsion function (left) and the first Laplacian eigenfunction (right).}
\end{figure}
The maxima of torsion function and eigenfunction are marked in the picture: they are at a distance of roughly $\sim 0.01 \diam(\Omega)$ from each other
and we have 
$$v(x_0) \sim 0.99948 \sup_{x \in \Omega}{v(x)}.$$
We emphasize that this example was essentially picked at random and one really seems to get comparable results for most domains. 
 The best upper bound we found through extensive numerical simulations is $c \leq 0.976$ (see Fig. 3). The boundary of the corresponding domain is given by
 $$r(\theta) = 1 + 0.49\cos{(2\theta)}.$$
The domain is rather delicate: the Laplacian eigenfunction cannot localize in any of the two balls and is forced to be fairly central while the
torsion function can localize to a stronger extent. Indeed, their maxima are at distance $\sim 0.2 \diam(\Omega)$. This shows that the close proximity of these two maxima
that was observed in \cite{be} is not always the case (and likely a result of the presence of an axis of symmetry and convexity of the domains considered in \cite{be}).   

\begin{figure}[h!]
\begin{minipage}{0.49\textwidth}
%\begin{center}
\includegraphics[width= 0.99\textwidth]{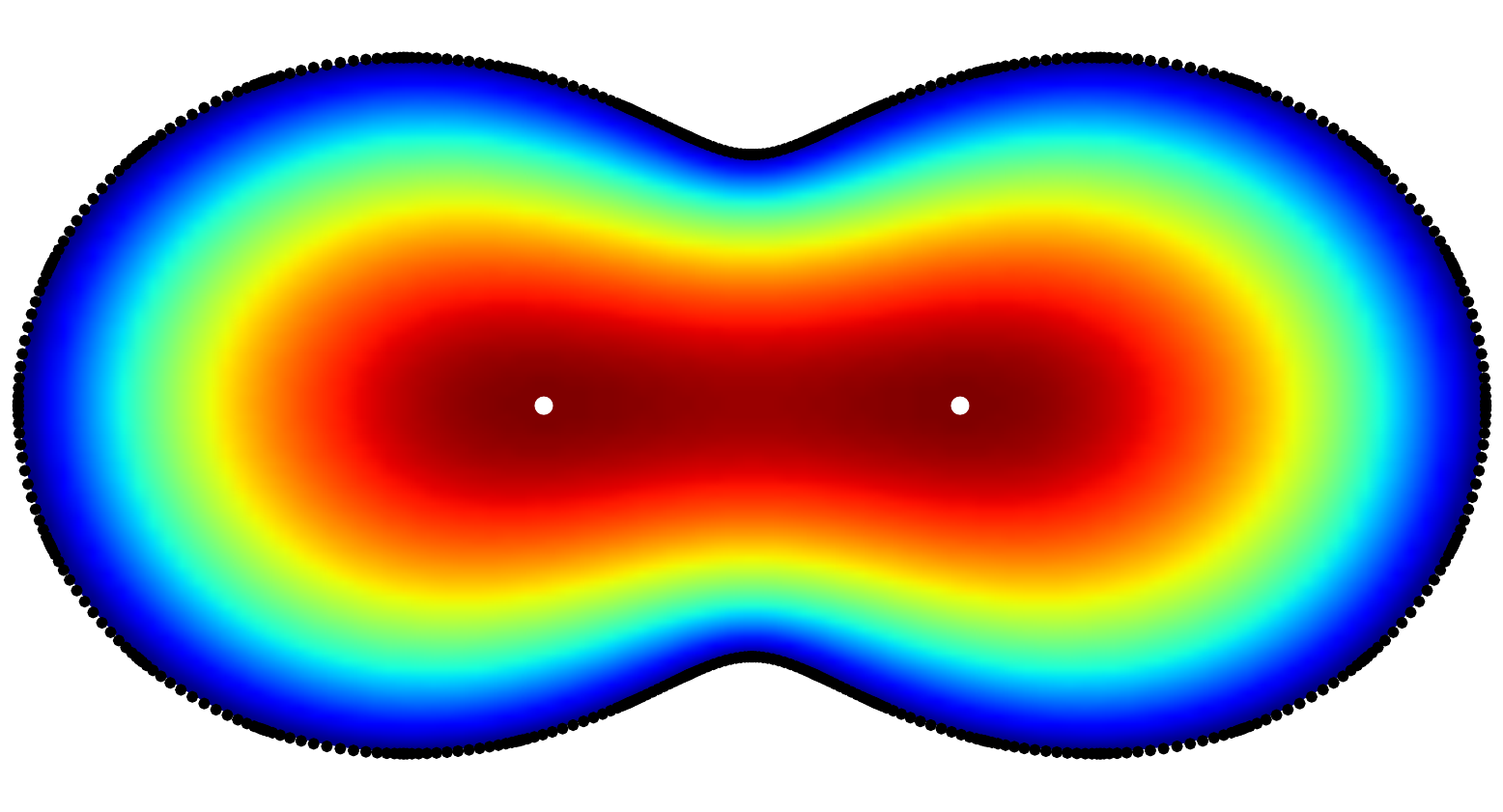}
%\end{center}
\end{minipage}
\begin{minipage}{0.49\textwidth}
%\begin{center}
\includegraphics[width=0.99\textwidth]{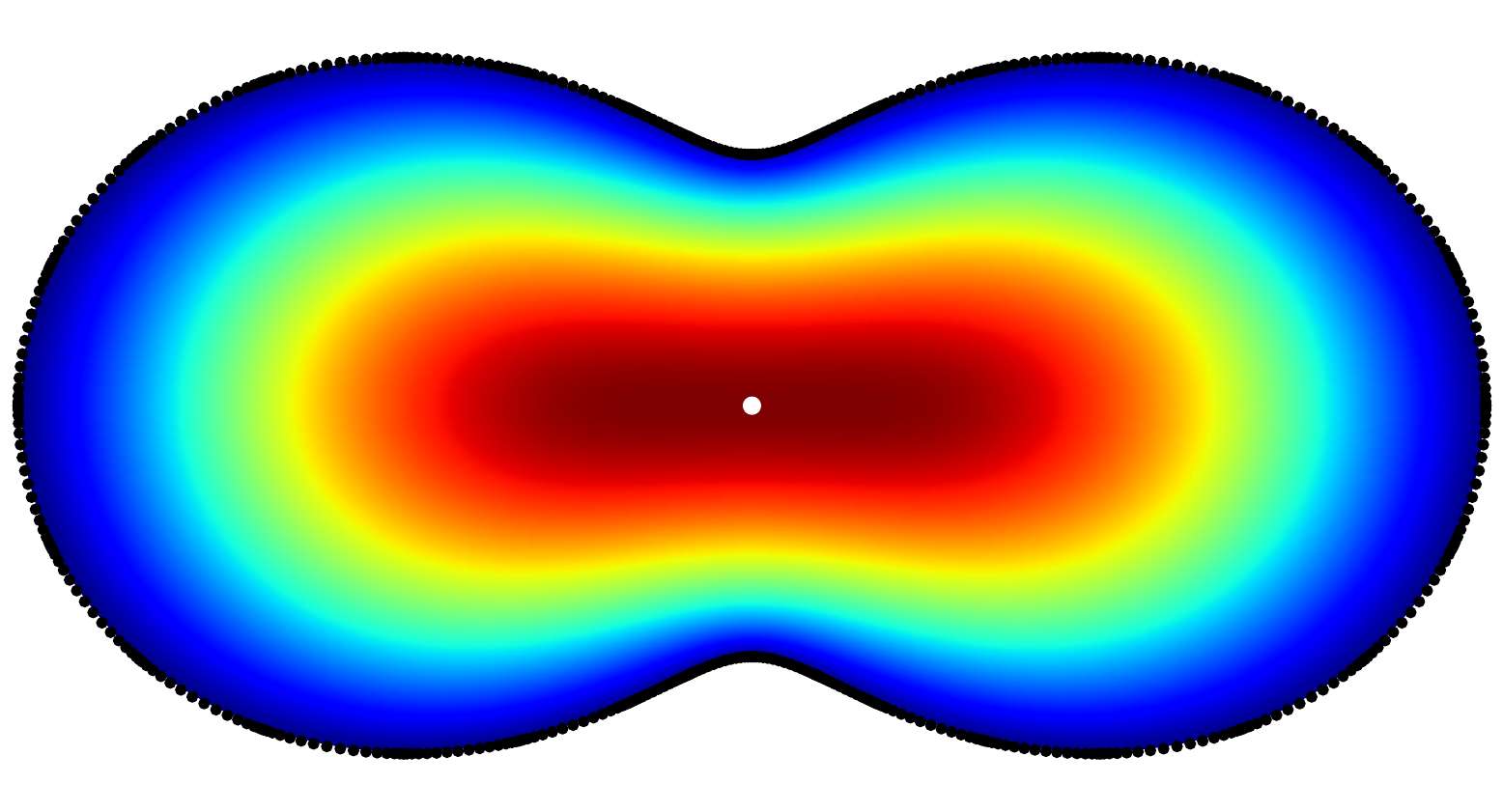}
%\end{center}
\end{minipage}
\caption{The torsion function (left) and the first Laplacian eigenfunction (right).}
\end{figure}
We consider these results striking and believe they are worthy of further study.

\begin{quote}
\textbf{Open problem.} What is the optimal constant in
$$  v(x_0) \geq c\sup_{x \in \Omega}{v(x)} \, ,$$
and for which domains is it assumed? What happens on convex domains?
\end{quote}
The same question can be asked for general nonlinear Poisson equations. For the first Laplacian eigenfunction on a 
convex domain in $\mathbb{R}^2$, results with a strong geometric
flavor have been obtained by Brasco, Magnanini \& Salani \cite{brasco} who related its 
location to the `heart' of the convex domain (see \cite{heart}).
The example in Fig. 3 shows that the optimal 
constant $c_1 > 0$ in
$$   \inf_{y \in \partial \Omega}{ \| x_0 - y\|}  \geq c_1 
\left\| \frac{\Delta u}{u} \right\|^{-1/2}_{L^{\infty}(\Omega)} \, , $$
satisfies $c \leq 1.37$ (the distance between the location of the maximum and the boundary is quite a bit smaller
than the inradius ($d(x_0, \partial \Omega) = 0.49 ,\mbox{inrad}(\Omega) \sim 0.71$ and $\lambda_1(\Omega) = 7.785$)). \\

We used high order boundary integral equation methods to compute both the
torsion function and the first eigenfunction with an accuracy of $10^{-6}$ 
for the examples displayed above.
Carefully designed boundary integral equations result 
in well-conditioned linear systems upon discretization and the 
condition number does not grow as the mesh is refined.
Moreover, these methods only require discretization of the boundary for solving 
homogenous elliptic partial differential equations, thereby significantly 
reducing the number of unknowns in the discretized linear system.
For a more detailed discussion of the numerical tools used in this paper, 
we refer to \cite{hao-barnett,helsing}.

\section{Proofs}
The proofs are all relatively short and based on probabilistic interpretation (we refer to Simon \cite{simon} for an introduction). The main idea is
to regard the solution of the elliptic problem $-\Delta u = Vu$ as the time-independent solution of the parabolic problem
\begin{align*}
u_t &= (\Delta + V)u \qquad \mbox{in}~\Omega \\
 u &= 0     \qquad \hspace{37pt} \mbox{on}~\partial\Omega.
\end{align*}
This parabolic problem is then analyzed by using the stochastic interpretation and the Feynman-Kac formula. We will denote a Brownian motion path
in $\mathbb{R}^n$ as $\omega:[0, \infty] \rightarrow \mathbb{R}^n$. The Feynman-Kac formula allows us to represent the solution of the parabolic
problem as
$$ e^{t(\Delta + V)}u(x) = \mathbb{E}_{x}\left(u(\omega(t)) e^{\int_{0}^{t}{V( \omega(z))dz}} \right),$$
where the expectation is taken with respect to all Brownian motions that start at $x$ with the condition that if it they collide with the boundary, then
they stays there for all time (the boundary is `sticky'). Since we know our solution to be time-independent, we get for all $t \geq 0$
$$ u(x) = \mathbb{E}_{x}\left(u(\omega(t)) e^{\int_{0}^{t}{V( \omega(z))dz}} \right).$$

\subsection{Proof of Theorem 1}
\begin{proof}
Assuming
$-\Delta u = V u$ for some $\|V\|_{L^{\infty}} < \infty$, the Feynman-Kac formula implies
$$ u(x) = \mathbb{E}_{x}\left(u(\omega(t)) e^{\int_{0}^{t}{V(\omega(z))dz}} \right) \qquad \mbox{for all}~t \geq 0.$$
Let us now assume that $x_0$ is a point at which $u$ has an extremum, i.e. $|u(x_0)| = \|u\|_{L^{\infty}}$. We
can assume that $u(x_0)$ is a maximum, the argument for a minimum is identical after multiplying the solution with $(-1)$. We introduce a function $p_{x_0}:\mathbb{R}_{+} \rightarrow [0,1]$ via
$$p_{x_0}(t) := \mbox{likelihood that a Brownian motion started in}~x_0~\mbox{hits the boundary within time $t$}.$$
This function is well-defined (alternatively, one could define $p_{x_0}$ as the point evaluation of the solution of a specific heat equation, see \cite{stein1}).
An application of the Feynman-Kac formula gives
\begin{align*}
 \|u\|_{L^{\infty}} = u(x_0) &=  \mathbb{E}_{x_{0}}\left(u(\omega(t)) e^{\int_{0}^{t}{V( \omega(z))dz}} \right) \\
&\leq  \|u\|_{L^{\infty}(\Omega)} \mathbb{E}_{x_{0}}\left(e^{\int_{0}^{t}{V( \omega(z))dz}} \right)  \leq   \|u\|_{L^{\infty}} (1- p_{x_0}(t))  e^{t \|V\|_{L^{\infty}}},
\end{align*}
and therefore, assuming $u$ to not be identically zero $\|u\|_{L^{\infty}(\Omega)} > 0$,
$$  (1-p_{x_0}(t)) e^{t \|V\|_{L^{\infty}}} \geq 1.$$
We observe that this argument gives the same conclusion more generally for positive maxima ($\forall x \in \Omega: u(x) \leq u(x_0)$ and $u(x_0) > 0$) and negative minima ($\forall x \in \Omega: u(x) \geq u(x_0)$ and $u(x_0) < 0$) -- the maximum of $|u|$ is always either a positive maximum or a negative minimum.
Parabolic rescaling allows us to assume that $d(x_0, \partial \Omega) = 1$ in which case it suffices to show that $\|V\|_{L^{\infty}(\Omega)} \geq c_1 > 0$ for some universal $c_1$. 
 We will now show that
$$p_{x_0}(1) \geq c_2 \, , \qquad \mbox{for some universal}~c_2 > 0 \, ,$$
which combined with the equation above, implies 
$$ \|V\|_{L^{\infty}} \geq \log{\left(\frac{1}{1-c_2}\right)} \, , 
\quad \mbox{from which the result follows.}$$
\begin{center}
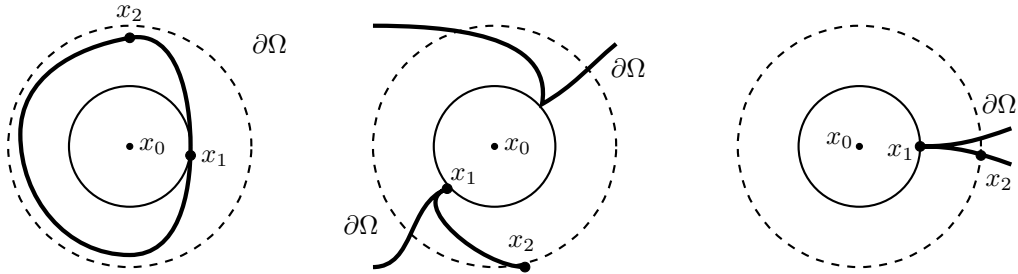
\begin{figure}[h!]
\begin{tikzpicture}[scale =0.8]
\filldraw (0,0) circle (0.05cm);
\draw [thick] (0,0) circle (1cm);
\draw [thick, dashed] (0,0) circle (2cm);
\draw [ultra thick] (-2, 2) to[out=0, in=80] (0.78, 0.7) to[out = 30, in =220] (2, 1.7);
\draw [ultra thick] (-2, -2) to[out=0, in=200] (-0.78, -0.7) to[out = 200, in =180] (0.5, -2);
\filldraw (-0.78,-0.7) circle (0.08cm);
\node at (-0.5, -0.5) {$x_1$};
\filldraw (0.5,-2) circle (0.08cm);
\node at (0.45, -1.6) {$x_2$};
\node at (0.38, 0) {$x_0$};
\node at (2.2, 1.3) {$\partial \Omega$};
\node at (-2.2, -1.3) {$\partial \Omega$};

\filldraw (-6,0) circle (0.05cm);
\draw [thick] (-6,0) circle (1cm);
\draw [thick, dashed] (-6,0) circle (2cm);
\draw [ultra thick] (-5, 0) to[out=90, in=10] (-6, 1.8) to[out = 190, in =90] (-7.8, 0.2)  to[out = 270, in =180] (-6, -1.8) to[out = 0, in =270] (-5, 0);
\filldraw (-5,-0.15) circle (0.08cm);
\node at (-4.6, -0.2) {$x_1$};
\filldraw (-6,1.8) circle (0.08cm);
\node at (-6, 2.2) {$x_2$};
\node at (0.68+5, 0.1) {$x_0$};
\node at (0.68+6, -0.1) {$x_1$};
\filldraw (7,0) circle (0.08cm);
\node at (2.3+6, 0.7) {$\partial \Omega$};
\filldraw (8,-0.15) circle (0.08cm);
\node at (8.3, -0.6) {$x_2$};

\filldraw (6,0) circle (0.05cm);
\draw [thick] (6,0) circle (1cm);
\draw [thick, dashed] (6,0) circle (2cm);
\draw [ultra thick] (8.5, 0.3) to[out=200, in=0] (7, 0) to[out = 0, in =160] (8.5, -0.3);
\node at (0.38-6, 0) {$x_0$};
\node at (2.3-6, 1.7) {$\partial \Omega$};
\end{tikzpicture}
\caption{The point of maximum $x_0$, the circle with radius $d(x_0, \Omega)$, the circle with radius $2 d(x_0, \Omega)$ (dashed) and the possible local geometry of $\partial \Omega$.}
\end{figure}
\end{center}
This is where the assumption of $\Omega$ being simply connected enters. The biggest disk that is centered at $x_0$ and is fully contained in $\Omega$ has radius $d(x_0, \partial \Omega) = 1$.
We put a circle of radius $2$ around $x_0$ and try to understand the form of $\partial \Omega$ in the annulus (see Fig. 4). The
assumption of $\Omega$ being simply connected implies that a long segment of the boundary $\partial \Omega$ is
contained in the annulus: more precisely, there are at least two points $x_1, x_2 \in \partial \Omega$ such that
$$ d(x_0, x_1) \leq 2 \qquad d(x_0, x_2) \leq 2 \qquad d(x_0, x_1) \geq 1$$
such that the $\partial \Omega$ is fully contained in the annulus between $x_1$ and $x_2$.
 The estimate $p_{x_0}(1) \geq c_2$ for some universal $c_2 > 0$ then follows from standard probabilistic estimates.
\end{proof}

\subsection{Proof of Theorem 2.}  
We first establish that the relevant property of location of extrema generalizes from Laplacian eigenfunctions to general Schr\"odinger equations.

\begin{lemma} 
Let $\Omega \subset \mathbb{R}^n$ be an open domain and let 
$u:\Omega \rightarrow \mathbb{R}$ satisfies
$ -\Delta u = Vu$ with Dirichlet conditions. If $|u(x_0)| = \|u\|_{L^{\infty}(\Omega)}$, then the likelihood of a Brownian motion
started in $x_0$ impacting on the boundary $\partial \Omega$ within time $t$ is 
$$p_{x_0}(T) := \mathbb{P} \left( \omega(t) \in \partial \Omega \quad ~~\hspace{3pt}~\mbox{for some}~\hspace{1pt}~~0 \leq t \leq T \right) \leq  1 - e^{-T \|V\|_{L^{\infty}}}.$$
\end{lemma}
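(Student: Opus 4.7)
The plan is to recycle the Feynman--Kac calculation that already appeared in the proof of Theorem 1, only this time reading off the inequality in the opposite direction: instead of using a lower bound on $p_{x_0}$ to deduce a lower bound on $\|V\|_{L^\infty}$, we use a bound on $\|V\|_{L^\infty}$ to deduce an upper bound on $p_{x_0}$. After replacing $u$ by $-u$ if necessary, we may assume $u(x_0) = \|u\|_{L^\infty(\Omega)} > 0$, and we may assume $\|V\|_{L^\infty} < \infty$, since otherwise the claimed inequality is trivial.

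First, I would invoke the stochastic representation
$$ u(x_0) \;=\; \mathbb{E}_{x_0}\!\left( u(\omega(T))\, e^{\int_0^T V(\omega(z))\,dz}\right),$$
where the Brownian path $\omega$ is killed (or, equivalently, stuck) at $\partial\Omega$ with the Dirichlet condition forcing $u(\omega(T)) = 0$ once the path has touched $\partial\Omega$. Splitting the expectation according to whether the path has hit the boundary by time $T$, the contribution of paths that have impacted vanishes, so
$$ u(x_0) \;\leq\; \|u\|_{L^\infty(\Omega)} \cdot \mathbb{E}_{x_0}\!\left( \mathbf{1}_{\{\omega(t)\notin \partial\Omega~\forall\, 0\leq t\leq T\}}\, e^{\int_0^T V(\omega(z))\,dz} \right).$$
Bounding the exponential crudely by $e^{T\|V\|_{L^\infty}}$ and pulling it out of the expectation gives
$$ \|u\|_{L^\infty(\Omega)} \;\leq\; \|u\|_{L^\infty(\Omega)} \cdot (1 - p_{x_0}(T))\, e^{T\|V\|_{L^\infty}}.$$

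Dividing through by $\|u\|_{L^\infty(\Omega)} > 0$ and rearranging yields $1 - p_{x_0}(T) \geq e^{-T\|V\|_{L^\infty}}$, which is precisely the stated bound. There is no real obstacle here: the argument is essentially identical to the one used at the start of Theorem 1, and the only thing to notice is that the estimate used there was already equivalent to the sharper statement being asserted now. The hypothesis on $x_0$ being a (positive) maximum is used only in the very first line, to conclude that $u(\omega(T)) \leq u(x_0)$ pointwise on non-impacting paths; the remark following Theorem 1 applies verbatim, so the same bound also holds at negative minima, and hence at every point where $|u|$ attains its global maximum.
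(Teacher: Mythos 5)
Your proposal is correct and follows essentially the same route as the paper's own proof: both apply the Feynman--Kac representation at the maximum point, drop the contribution of paths that have already hit the boundary (where $u$ vanishes), bound the multiplicative factor by $(1-p_{x_0}(T))e^{T\|V\|_{L^\infty}}$, and rearrange. The paper states this in two lines; you have merely filled in the splitting of the expectation and the sign normalization, which the paper takes for granted.
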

\begin{proof} The proof is a straightforward adaption from \cite{stein1}. As in the proof of Theorem 1, we have
$$ \|u\|_{L^{\infty}} = u(x_0) =  \mathbb{E}_{}\left(u(\omega(T)) e^{\int_{0}^{T}{V( \omega(z))dz}} \right) \leq (1- p_{x_0}(T)) \|u\|_{L^{\infty}}  e^{T \|V\|_{L^{\infty}}}$$
and therefore
$$ p_{x_0}(T) \leq 1 - e^{-T \|V\|_{L^{\infty}}}.$$
\end{proof}
We can now invoke the argument of Georgiev \& Mukherjee \cite{georg} (their argument only uses that Brownian motion started in the point of maximum
has a quantitatively controlled likelihood of hitting the boundary, which we have just established in the more general context): we present their argument in abbreviated form: if a large part of $B$ was outside of $\Omega$, the likelihood of hitting the boundary would be large but we just established that it is not.
\begin{proof}[Proof of Theorem 2]   We pick a suitable
ball $B$ centered at $x_0$ with radius $r$, where
$$ r = c \left\| \frac{\Delta u}{u} \right\|^{-1/2}_{L^{\infty}(\Omega)} \, ,$$
and study the behavior of $B_{1} = \Omega \cap B$ and $B_2 = B \setminus \Omega$. 
The crucial idea
is to invoke the 2-capacity of $B_2$. A hitting inequality of Grigor'yan \& Saloff-Coste \cite{grig} implies that
$$ \mbox{cap}(B_2) \leq p_{x_0}(t) r^{n-2}.$$
The previous Lemma implies that the hitting probability can be made smaller than any fixed $\varepsilon > 0$ by making the radius smaller. 
Conversely, the isocapacitory inequality (see e.g. Maz'ya \cite[Section 2.2.3]{mazya}) implies that
$$ |B_2|^{\frac{n-2}{n}}  \leq \mbox{cap}(B_2).$$
This then implies that $|B_2| \leq \varepsilon |B_1|$ (after possibly changing the value of $c$ in a way that only depends on $\varepsilon$).
\end{proof}

\subsection{Proof of Corollary 2.}
\begin{proof} The torsion function $v(x_0)$ has a probabilistic interpretation (again via the Feynman-Kac formula)
as giving the expected lifetime of Brownian motion started in $x_0$ before it hits the boundary. Our main result 
implies that if the first Laplacian eigenfunction has a maximum at $x_0 \in \Omega$, then
$$   \inf_{y \in \partial \Omega}{ \| x_0 - y\|}  \geq c\cdot \lambda_1(\Omega)^{-1/2},$$
where $c$ is a universal constant. Since Brownian motion (generically) moves within a ball of radius $\sim \sqrt{t}$ within
time $\sim t$, this implies
$$ v(x_0) \geq c^*  \lambda_1(\Omega)^{-1}$$
for some different universal constant $c^{*} > 0$ that only depends on $c$. A precise bound on $c^{*}$ (depending
on $c$) could be derived using the reflection principle (see \cite{stein1}) but this is of no importance here.
 The result then follows from
$$ \|v\|_{L^{\infty}} \leq 4\cdot\mbox{inradius}(\Omega)^2 \leq \frac{4 j_0^2}{ \lambda_1(\Omega)},$$
where the first inequality can be found in \cite{ban3, ban2} and the second inequality (with $j_0 \sim 2.4\dots$ being the smallest
positive zero of the Bessel function) follows from putting a suitable test function inside the inradius.
\end{proof}
It is clear that these standard elliptic arguments are not sufficiently refined to explain why the constant in Corollary 2 is so close to 1: more sophisticated arguments are required.

\section{Remarks and Comments}

\subsection{Dynamical interpretation} There is a dynamical interpretation of Theorem 1 that may prove to be another avenue towards sharper bounds. Consider
a function satisfying $-\Delta u = Vu$ and let $x_0$ be a maximum.

\begin{center}
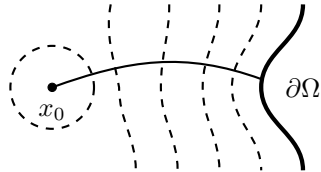
\begin{figure}[h!]
\begin{tikzpicture}[scale =1.1]
\filldraw (0,0) circle (0.05cm);
\draw [ultra thick] (3, 1) to[out=270, in=90] (2.5, 0) to[out = 270, in =90] (3, -1);
\node at (0,- 0.3) {$x_0$};
\draw [dashed, thick] (2.5, 1) to[out=270, in=90] (2.15, 0) to[out = 270, in =90] (2.5, -1);
\draw [dashed, thick] (2, 1) to[out=270, in=90] (1.8, 0) to[out = 270, in =90] (2, -1);
\draw [dashed, thick] (1.4, 1) to[out=270, in=90] (1.3, 0) to[out = 270, in =90] (1.5, -1);
\draw [dashed, thick] (0.7, 1) to[out=260, in=100] (0.8, 0) to[out = 270, in =90] (1, -1);
\draw [dashed, thick] (0.5, 0) to[out=270, in=0] (0,-0.5) to[out = 180, in =270] (-0.5, 0) to[out = 90, in =180] (0,0.5) to[out = 0, in =90] (0.5, 0);
\draw [thick] (0, 0) to[out=20, in=160] (2.5, 0.1);
\node at (3, 0) {$\partial \Omega$};
\end{tikzpicture}
\caption{The point of maximum $x_0$, level sets and the gradient flow towards $\partial \Omega$.}
\end{figure}
\end{center}
One would assume that, if $x_0$ is very close to the boundary, then the arclength-normalized gradient flow $\gamma: \mathbb{R}_{+} \rightarrow \Omega$ given by
$$ \dot \gamma(t) = - \frac{\nabla u(\gamma(t))}{\left| \nabla u(\gamma(t)) \right|} \, ,$$
will flow rather quickly to the boundary without getting trapped in critical points. Let us then consider the function $y(t) = u(\gamma(t))$. In two dimensions, the Laplacian has an interpretation with respect to level sets
$$ \Delta u = u_{\nu \nu} + \kappa u_{\nu},$$
where $u_{\nu}$ and $u_{\nu \nu}$ are the first and second derivative in the direction $\nabla u$ and $\kappa$ is the curvature
of the level set. Ignoring the curvature of the level set and assuming the gradient flow to move relatively quickly to the boundary,
we see that the equation for $y(t)$ can be approximated by
\begin{align*}
 - y''(t) &= V(\gamma(t)) y(t) \\
y'(0) &= 0 \\
y(0) &= \|u\|_{L^{\infty}(\Omega)}.
\end{align*}
A standard comparison inequality argument yields that
$$ y(t) \geq \cos{ \left(\|V\|^{1/2}_{L^{\infty}}t \right) } \cdot y(0).$$
This means that $y(t)$ requires $ \|V\|^{1/2}_{L^{\infty}}t \geq \pi/2$, which requires $t \sim \|V\|^{-1/2}_{L^{\infty}}$. It is not
clear to us whether any argument in this direction could be made quantitative.

\subsection{Barta's inequality.} Our inequality may also be regarded as a refinement of a classical inequality of Barta \cite{barta}, which states that for any $u \in C^2(\Omega, \mathbb{R})$ that vanishes on the boundary $\partial \Omega$
$$ \lambda_1(\Omega) \leq \sup_{x \in \Omega}{\frac{ -(\Delta u)(x)}{u(x)}} \leq \left\| \frac{\Delta u}{u}\right\|_{L^{\infty}(\Omega)}.$$
Barta's inequality implies that the best lower bound we can get on
$$   \inf_{y \in \partial \Omega}{ \| x_0 - y\|} \geq c \left\| \frac{\Delta u}{u}\right\|^{-1/2}_{L^{\infty}(\Omega)}   \qquad \mbox{is bounded from above by} \quad c \cdot \lambda_1(\Omega)^{-1/2},$$
which is the sharp result since
$$\mbox{inradius}(\Omega) \leq \frac{j_0}{ \lambda_1(\Omega)^{1/2}}.$$
Conversely, the simple inequality $\mbox{inradius}(\Omega) \leq  j_0 \lambda_1(\Omega)^{-1/2}$ could be used in conjunction with our inequality to obtain
$$   c \left\| \frac{\Delta u}{u}\right\|^{-1/2}_{L^{\infty}(\Omega)} \leq \inf_{y \in \partial \Omega}{ \| x_0 - y\|} \leq \mbox{inradius}(\Omega) \leq \frac{j_0}{ \lambda_1(\Omega)^{1/2}},$$
which implies Barta's inequality (with a non-optimal constant) and can be used to derive a stability version of Barta's inequality: if the distance of the maximum of $u$ to the boundary
is not on the same scale as the inradius, then Barta's inequality cannot be close to being sharp.

\subsection{Bounds for the constant.} It might be of interest to
get a good understanding of the optimal constant $c_1 > 0$ in 
$$   \inf_{y \in \partial \Omega}{ \| x_0 - y\|}  \geq c_1 \left\| \frac{\Delta u}{u} \right\|^{-1/2}_{L^{\infty}(\Omega)}.$$
As above, by specializing to $u$ being the first eigenfunction of the Laplacian and bounding the distance of the extremum to the boundary from above by the inradius, we get, for some $c_2 \geq c_1$,
$$ \inf_{\Omega ~\mbox{\SMALL simply connected}} ~~~~~\mbox{inradius}(\Omega) \sqrt{\lambda_1(\Omega)} \geq c_2,$$
where $c_2$ satisfies $c_2 \leq 1.445$ \cite{brown}. Our explicit construction (see Fig. 3) yields $c_1 \leq 1.37$. Lower bounds for these types of problems have always been more difficult \cite{ban2, croke,hayman,hersch,makai,osserman,protter}, the
currently best result $c_2 \geq 0.78$ being due to Banuelos \& Carroll \cite{ban2}. The purpose of this section is to
sketch how one could make our result $c_1 > 0$ quantitative. 

\begin{proposition} The straightforward quantization of our argument can at most achieve
$$ c_1 \geq 0.1.$$
\end{proposition}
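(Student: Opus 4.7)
The plan is to carefully track the constants in the proof of Theorem~1 and to exhibit a near-extremal simply connected domain. After the parabolic rescaling, we may assume $d(x_0,\partial\Omega) = 1$, in which case the Feynman--Kac step of the proof yields $(1-p_{x_0}(t))\,e^{t\|V\|_{L^\infty}} \geq 1$ for every $t > 0$, so
\[
\|V\|_{L^\infty}\geq \sup_{t>0}\frac{-\log(1-p_{x_0}(t))}{t}.
\]
Consequently the best constant this approach can possibly yield is
\[
c_1^{\mathrm{arg}} = \inf_{\Omega}\sqrt{\sup_{t>0}\frac{-\log(1-p_{x_0}(t))}{t}},
\]
where the infimum runs over simply connected planar domains $\Omega\ni x_0$ with $d(x_0,\partial\Omega)=1$. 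To prove the proposition it is enough to exhibit a single such $\Omega^\star$ on which the inner supremum does not exceed $10^{-2}$.

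My candidate for $\Omega^\star$ is the slit (Koebe) domain $\mathbb{R}^2\setminus\{(x,0):x\leq -1\}$ with $x_0=0$: up to rescaling this is the image of the Koebe function, so among all simply connected domains with $d(x_0,\partial\Omega)=1$ it is extremal for the conformal distortion $|f'(0)|$ of the Riemann map from the unit disk. Heuristically this makes planar Brownian motion from $x_0$ slow to reach $\partial\Omega$, consistent with $\partial\Omega^\star$ being a one-dimensional ray, i.e.\ as \emph{thin} as possible. To estimate $p_{x_0}(t)$ I would conformally map $\Omega^\star$ to the right half-plane via $f(z)=\sqrt{z+1}$, which sends $x_0$ to $1$ and the slit to the imaginary axis. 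The conformal invariance of Brownian motion, together with its random time change, yields
\[
p_{x_0}(t)=\mathbb{P}\Bigl(\int_0^{\tilde T} 4|\tilde W_v|^2\,dv\leq t\Bigr),
\]
where $\tilde W$ is Brownian motion on the right half-plane from $1$ and $\tilde T=\inf\{v:\operatorname{Re}\tilde W_v=0\}$.

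On paths where $\tilde W$ stays close to $1$ the integrand is close to $1$, and the leading contribution to $p_{x_0}(t)$ is $P(\tilde T\leq t/4)=2(1-\Phi(2/\sqrt{t}))$. Substituting and changing variables $x=2/\sqrt t$ yields
\[
\frac{-\log(1-p_{x_0}(t))}{t}\approx \frac{x^2}{4}\bigl(-\log(2\Phi(x)-1)\bigr),
\]
an elementary calculus optimisation of whose right-hand side gives a maximum of order $10^{-2}$. Taking the square root recovers $c_1^{\mathrm{arg}}\lesssim 0.1$.

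The main obstacle is controlling the random time change $A(u)=\int_0^u 4|\tilde W_v|^2\,dv$ rigorously. Since $\mathbb{E}|\tilde W_v|^2 = 1+2v$ grows linearly while the half-plane Green's function at infinity only decays logarithmically, $\mathbb{E}[A(\tilde T)]=\infty$ and crude moment bounds fail; one instead works pathwise, splitting on whether $\sup_{v\leq\tilde T}|\tilde W_v|$ exceeds a chosen threshold and handling the two regimes separately. A complementary route is to compute $p_{x_0}(t)$ numerically (for instance by Monte Carlo simulation of the slit-domain Brownian motion), verify the bound $\sup_t -\log(1-p_{x_0}(t))/t\asymp 10^{-2}$ directly, and use the fact that $1-p_{x_0}(t)$ decays only polynomially in $t$ on unbounded planar domains to rule out improvements in the large-$t$ regime.
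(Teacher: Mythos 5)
Your opening reformulation is exactly the paper's: after rescaling to $d(x_0,\partial\Omega)=1$, the Feynman--Kac step gives $\|V\|_{L^\infty}\geq \sup_{t>0}\frac{1}{t}\log\frac{1}{1-p_{x_0}(t)}$, and the constant achievable by the argument is governed by an infimum of this quantity over admissible geometries. After that you diverge from the paper in two substantive ways, and the second is a genuine gap.

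First, the choice of extremal domain. The paper's argument for Theorem~1 reduces the hitting probability to that of a connected arc of $\partial\Omega$ of length $\geq 1$ joining the circle of radius $1$ to the circle of radius $2$; the paper then takes this arc to be the straight segment $\{(x,0):1\leq x\leq 2\}$ and conjectures that among all such arcs this segment \emph{minimizes} the hitting probability. You instead take the Koebe slit $\mathbb{R}^2\setminus\{(x,0):x\leq -1\}$, on the grounds that it extremizes the conformal distortion $|f'(0)|$. But extremality for conformal radius is not the relevant criterion here, and in fact it points the wrong way: the ray $(-\infty,-1]$ strictly contains the unit-length subsegment the paper works with, so by monotonicity of hitting probabilities the Koebe domain has \emph{larger} $p_{x_0}(t)$ than the paper's candidate. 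A larger hitting probability gives a \emph{larger} value of $\sup_t\frac{1}{t}\log\frac{1}{1-p_{x_0}(t)}$, i.e.\ a \emph{weaker} upper bound on the constant that the argument can achieve. So even if your heuristic calculation were carried out rigorously, you would be bounding $c_1^{\mathrm{arg}}$ by a number that is demonstrably larger than the paper's, not proving the proposition as stated. The Koebe extremality you invoke is a red herring: it concerns $|f'(0)|$ while the quantity of interest involves the full time-changed dynamics, which penalize long boundary.

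Second, the numerics in your sketch do not support the stated conclusion. Taking your approximation $p_{x_0}(t)\approx 2\bigl(1-\Phi(2/\sqrt t)\bigr)$ and the substitution $x=2/\sqrt t$, the function $f(x)=\tfrac{x^2}{4}\bigl(-\log(2\Phi(x)-1)\bigr)$ attains its maximum near $x\approx 1$, where $f(1)=\tfrac14\bigl(-\log(2\Phi(1)-1)\bigr)\approx\tfrac14\cdot 0.382\approx 0.095$. This is of order $10^{-1}$, not $10^{-2}$, and its square root is $\approx 0.31$, not $0.1$. So the chain ``$\max f\sim 10^{-2}$, hence $c_1^{\mathrm{arg}}\lesssim 0.1$'' breaks at the penultimate step even within your own heuristic. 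By contrast, the paper's ``proof'' (itself admittedly informal) proceeds by a direct Monte--Carlo estimate $p_{(0,0)}(1)\sim 0.1$ for the finite segment, paired with a stated conjecture that the straight segment is extremal among unit-length arcs in the annulus; it does not attempt a rigorous conformal-map/time-change analysis and explicitly lists what would be needed to make it rigorous. Your time-change discussion correctly identifies the difficulty (the Green's-function growth makes $\mathbb{E}[A(\tilde T)]=\infty$), but you do not resolve it, and the pathwise splitting you gesture at is left entirely to the reader. In short: the reformulation step matches, but the extremal domain, the extremality justification, and the final numerical estimate all differ from --- and in the last case contradict --- the paper.
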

We can reformulate 
 $$  (1-p_{x_0}(t)) e^{t \|V\|_{L^{\infty}}} \geq 1 \qquad \mbox{as} \qquad \|V\|_{L^{\infty}(\Omega)} \geq \sup_{t > 0}{ \frac{1}{t} \log{\left(\frac{1}{1-p_{x_0}(t)}\right)} }.$$

Parabolic rescaling allows us to set $d(x_0 , \partial \Omega) = 1$.  We will focus on the case, where the boundary of $\partial \Omega$ behaves like a straight line moving away from $x_0$. 
\begin{center}
\begin{figure}[h!]
\begin{tikzpicture}[scale=0.6]
\filldraw (4,0) circle (0.05cm);
\draw [thick] (4,0) circle (1cm);
\draw [thick, dashed] (4,0) circle (2cm);
\draw [ultra thick] (6.5, 0.2) to[out=200, in=0] (5, 0) to[out = 0, in =160] (6.5, -0.2);
\node at (0.38+4, 0) {$x_0$};
\node at (2.3+4, 1.7) {$\partial \Omega$};
\draw [thick] (15.5,0) -- (18,0);
\filldraw (13,0) circle (0.05cm);
\node at (13, -0.4) {$(0,0)$};
\filldraw (15.5,0) circle (0.05cm);
\node at (15.5, -0.4) {$(1,0)$};
\filldraw (18,0) circle (0.05cm);
\node at (18, -0.4) {$(2,0)$};
\coordinate [label=left:] (x) at (13,0);
\coordinate [] (y) at (13,0);
\node (D) [name path=D,draw,circle through=(y),label=left:$B$] at (x) {};
\draw (13,0) \foreach \x in {1,...,100}{--++(rand*0.25,rand*0.25)};
\end{tikzpicture}
\caption{Brownian motion started in $(0,0)$ and running for time $t=1$. What is the likelihood of it crossing the interval $\left\{(x,0): 1 \leq x \leq 2\right\}$?}
\end{figure}
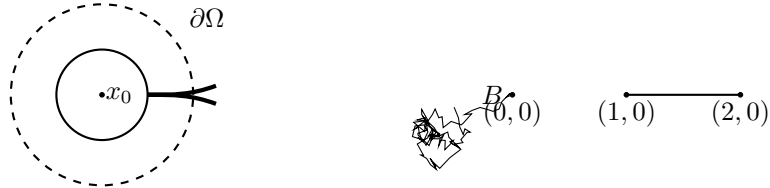
\end{center}
We invoke no quantitative information on the rest of the boundary and pretend that the boundary only consists of this line segment. 
A straightforward Monte-Carlo simulation gives that $p_{(0,0)}(1) \sim 0.1$.
This could be turned into a rigorous result if (a) the Monte-Carlo estimation was replaced by a rigorous computation (possibly by solving the associated heat 
equation), and (b) by establishing the following interesting  conjecture (or a slightly weaker quantitative variant).
\begin{quote} \textbf{Conjecture.} Let $\gamma: [0,1] \rightarrow \mathbb{R}^2$ be a smooth curve such that $\|\gamma(0)\| = 1$ and $\|\gamma(1) - \gamma(0)\| = 1$.
The likelihood of Brownian motion starting in $(0,0)$ and running for time $0 < T < \infty$ hitting the curve is minimized if and only if the curve is the straight line $\gamma(t) = (t+1,0)$ (or a
rotation thereof).
\end{quote}

One approach towards getting improved estimates might be the following: considering Fig. 5, it seems obvious that unless $\partial \Omega$ touches the ball $B(x_0, d(x_0, \partial \Omega))$
from multiple sides, the maximum is not likely to be achieved in $x_0$. Having multiple parts of the boundary touch the ball quickly leads to domains of the type that were used in
\cite{ban2, brown} to establish upper bounds.

\subsection{Rougher potentials.}  The presentation of our results is motivated by classical problems
related to Laplacian eigenfunctions. However, the approach certainly extends to rougher potentials. If we assume that
\begin{align*}
- \Delta u &= V u\quad \mbox{in}~\Omega \\
 u &= 0     \qquad \hspace{4pt} \mbox{on}~\partial\Omega,
\end{align*}
then the proper limit for our argument are applicability of the Feynman-Kac formula and 
$$h(t) \rightarrow 1 \qquad \mbox{where} \quad h(t) :=  \sup_{x \in \Omega}{  \mathbb{E}_{x}\left( e^{\int_{0}^{t}{|V(\omega(z))|dz}} \right) }$$
which naturally relates to Kato class via Khas'minskii lemma \cite{khas, simon}. Our argument implies  
$$ p_{x_0}(t) \leq 1 - h(t)^{-1}$$
which implies that the `wave-length' or radius of a ball that is approximately contained is given by $\sim \sqrt{T}$, where $T = \sup{ \left\{  t > 0: h(t) \leq 2 \right\}  }$.
If, for example, $h(t) \leq \exp{(t \|V\|_{L^{\infty}})}$, then we have $T \sim 1/\|V\|_{L^{\infty}}$ and recover our main result above. \\

\textbf{Acknowledgement.} The authors are grateful to Richard Laugesen for pointing out the connection to Barta's inequality.

\end{document}